\newcommand{\R}{\mathds{R}}                   
\newcommand{\z}{\mathds{Z}}
\newcommand{\q}{\mathds{Q}}
\newcommand{\CP}{\mathds{C}\mathrm{P}}
\newcommand{\C}{\mathds{C}}
\newcommand{\K}{K\"{a}hler}
\newcommand{\GW}{GW}
\newcommand{\mult}{{\operatorname{mult}}}
\newtheorem{theor}{Theorem}
\newtheorem{lem}[theor]{Lemma}
\newtheorem{cor}[theor]{Corollary}
\newtheorem{remark}[theor]{Remark}
\begin{document}

\title[Some remarks on the Gromov width of homogeneous Hodge manifolds]{Some remarks on the Gromov width of homogeneous Hodge manifolds
}

\author{Andrea Loi}
\address{(Andrea Loi) Dipartimento di Matematica 
         Universit\`a di Cagliari (Italy)}
         \email{loi@unica.it}

\author{Roberto Mossa}
\address{(Roberto Mossa) Dipartimento di Matematica 
         Universit\`a di Cagliari (Italy)}
         \email{roberto.mossa@gmail.com}

\author{Fabio Zuddas}
\address{(Fabio Zuddas) Dipartimento di Matematica e Informatica 
         Udine (Italy)}
\email{fabio.zuddas@uniud.it}

%
%

\begin{abstract}
We provide an upper bound for the Gromov width of compact homogeneous Hodge manifolds $(M, \omega)$
with $b_2(M)=1$. As an application we obtain an upper bound on the Seshadri constant $\epsilon (L)$
where $L$ is the ample line bundle on $M$ such that $c_1(L)=[\frac{\omega}{\pi}]$.
\end{abstract}

\keywords{Gromov width; Homogeneous \K\ manifolds; Hodge manifolds; pseudo symplectic capacities.}

\subjclass[2000]{53D05;  53C55;  53D05; 53D45}

\maketitle

\section{Introduction}	
Consider the open ball of radius $r$,
\begin{equation}\label{ball}
B^{2n}(r)=\{(x, y)\in\R^{2n}\  |\  \sum_{j=1}^n x_j^2 + y_j^2<r^2 \}
\end{equation}
in the standard symplectic space $(\R^{2n}, \omega_0)$, where $\omega_0=\sum_{j=1}^n dx_j\wedge dy_j$.
The Gromov width of a $2n$-dimensional symplectic manifold $(M, \omega)$, introduced in \cite{GROMOV85}, is defined as
\begin{equation}\label{gromovwidth}
c_G(M, \omega)= \sup \{\pi r^2 \ |\ B^{2n}(r)\    \mbox{symplectically embeds into}  \  (M, \omega)\}.
\end{equation}

By Darboux's theorem $c_G(M, \omega)$ is a positive number.
In the last twenty years computations and estimates of the Gromov width for various examples have been
obtained by several authors (see, e.g.  \cite{LMZ13} and reference therein).

Gromov's width is an example of  \emph{symplectic  capacity} introduced  in \cite{HOFERZEHNDER90} (see also \cite{HOFERZEHNDER94}).
A map $c$ from the class  ${\mathcal C} (2n)$ of all symplectic manifolds of dimension $2n$ to $[0, +\infty]$
is called a \emph{symplectic capacity} if it satisfies the following conditions:

({\bf monotonicity}) if there exists a symplectic embedding $(M_1, \omega_1)\rightarrow (M_2, \omega_2)$ then 
$c(M_1, \omega_1)\leq c(M_2, \omega_2)$; 

({\bf conformality}) $c(M, \lambda\omega)=|\lambda|c(M, \omega)$, for every $\lambda\in\R\setminus \{0\}$; 

({\bf nontriviality}) $c(B^{2n}(r), \omega_0)=\pi r^2 =c(Z^{2n}(r), \omega_0)$.

\noindent
Here  $Z^{2n}(r)$ is the unitary   open cylinder in the standard $(\R^{2n}, \omega_0)$, i.e.
\begin{equation}\label{Zcil}
Z^{2n}(r)=\{(x, y)\in\R^{2n} \ | \ x_1^2+y_1^2<r^2\}.
\end{equation}

Note that the monotonicity property implies that $c$ is a symplectic invariant.
The existence of a capacity is not a trivial matter. 
It is easily seen that the Gromov width is the smallest symplectic capacity, i.e.   $c_G(M, \omega)\leq c (M, \omega)$ for any capacity $c$.  
Note that the  nontriviality property for $c_G$ comes from  the celebrated 
\emph{Gromov's nonsqueezing theorem} according to which  the existence of a symplectic  embedding of
$B^{2n}(r)$ into 
$Z^{2n}(R)$ implies $r\leq R$. Actually it is easily seen that the existence of any capacity implies 
Gromov's  nonsqueezing theorem. 

Recently  (see \cite{LMZ13}) the authors of the present paper have computed the Gromov width 
of all Hermitian symmetric spaces of compact and noncompact type and their products extending the
previous results of G. Lu \cite{LU06} (see also \cite{GWgrass})  for the case of complex Grassmanians.

The aim of this paper is to provide un upper bound of the Gromov width of  homogeneous Hodge manifolds
with second Betti number equal to one.
In this paper a homogeneous Hodge manifold is a compact \K\ manifold $(M, \omega)$ such that 
$\frac{\omega}{\pi}$ is integral and such that  the group of  holomorphic isometries of $M$ acts transitively on $M$.

Our main results are the following three theorems.

\begin{theor}\label{main}
Let $(M, \omega)$ be a compact homogeneous Hodge manifold such that $b_2(M)=1$
and $\omega$ is normalized  so that $\omega(A)=\int_A\omega =\pi$ for the generator  $A\in H_2(M, \z)$. Then 
\begin{equation}\label{cGHSSCT}
c_G (M, \omega)\leq \pi.
\end{equation} 
\end{theor}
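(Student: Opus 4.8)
\medskip
\noindent\emph{Plan of the proof.}
The plan is to combine the classification of compact homogeneous \K\ manifolds with the Gromov--Witten machinery of pseudo symplectic capacities, in the spirit of \cite{LU06, LMZ13}. First I would invoke the Borel--Remmert structure theorem --- a compact homogeneous \K\ manifold is biholomorphic to a product of a complex torus and a rational homogeneous manifold --- which, together with $b_2(M)=1$, leaves exactly two cases. If $\dim_\C M=1$, then $M$ is $\CP^1$ or an elliptic curve; here $H_2(M,\z)=\z A$ with $\omega(A)=\int_M\omega$, and any symplectic ball $B^{2}(r)\hookrightarrow(M,\omega)$ satisfies $\pi r^2=\mathrm{vol}\,B^2(r)\le\mathrm{vol}(M,\omega)=\int_M\omega=\omega(A)=\pi$, so $c_G(M,\omega)\le\pi$ by a volume count alone. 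If $\dim_\C M\ge 2$, then $M=G/P$ is a generalized flag manifold of Picard number one: it is Fano, $H_2(M,\z)\cong\z$ is generated by the class $A$ of a line, $\omega(A)=\pi$ after the stated normalisation, and $M$ is covered by its lines. It remains to treat this case.

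\medskip
\noindent Next I would recall Lu's theorem on pseudo symplectic capacities: if some genus zero Gromov--Witten invariant $\Psi^M_{A,0,k}([\mathrm{pt}],\beta_2,\dots,\beta_k)$ carrying at least one point insertion is nonzero, then
\[
c_G(M,\omega)\le c_{HZ}(M,\omega)\le\omega(A)=\pi .
\]
The mechanism is the following. Given a symplectic embedding $\varphi\colon B^{2n}(r)\hookrightarrow(M,\omega)$, put $p=\varphi(0)$ and choose an $\omega$-compatible almost complex structure $J$ on $M$ equal to the pushforward $\varphi_*J_0$ of the standard structure on $\varphi(B^{2n}(r'))$, for a fixed $r'<r$ (so $\varphi$ is an isometry onto its image there). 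Since $b_2(M)=1$, the class $A$ is primitive and indecomposable, so no genuine bubbling or multiple covering can occur in class $A$; Gromov compactness, fed by the nonvanishing of the above invariant, then produces a simple $J$-holomorphic sphere $v$ in class $A$ with $p\in v(\CP^1)$. The part of $v(\CP^1)$ lying in $\varphi(B^{2n}(r'))$ pulls back under $\varphi^{-1}$ to a one-dimensional complex analytic subvariety of the Euclidean ball $B^{2n}(r')$ passing through its centre, so its area is $\ge\pi(r')^2$ by the monotonicity (Lelong) inequality, while it is $\le\int_{\CP^1}v^*\omega=\omega(A)=\pi$. Hence $r'\le1$, and letting $r'\to r$ gives $r\le1$, i.e. $\pi r^2\le\pi$; taking the supremum over embedded balls yields $c_G(M,\omega)\le\pi$. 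So everything reduces to exhibiting one such nonzero invariant of the minimal class $A$.

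\medskip
\noindent The third step produces that invariant from the geometry of lines. Fix a general point $p\in M$. As $M$ is covered by lines, the lines through $p$ form a nonempty family whose union $\mathcal H_p\subset M$ is an effective algebraic cycle of dimension $\ge1$, so $[\mathcal H_p]\neq0$ in $H_*(M,\q)$; by Poincar\'e duality there is a class $\beta$ of complementary dimension with $\langle[\mathcal H_p],\beta\rangle\neq0$. Since $G/P$ is convex ($f^*TM$ is globally generated, hence $H^1(\CP^1,f^*TM)=0$ for a line $f$), the moduli space $\overline{\mathcal M}_{0,2}(M,A)$ is smooth of the expected dimension, the genus zero invariants are enumerative, and the first evaluation map is a submersion by homogeneity; evaluating at the two marked points then gives $\Psi^M_{A,0,2}([\mathrm{pt}],\beta)=d\,\langle[\mathcal H_p],\beta\rangle\neq0$ for some integer $d\ge1$. (Equivalently, in $QH^*(G/P)$ the coefficient of $q$ in $c_1(L)\ast[\mathrm{pt}]$ is nonzero, which unwinds by the divisor axiom to the same statement.) Combined with the second step this proves $c_G(M,\omega)\le\omega(A)=\pi$.

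\medskip
\noindent I expect the main obstacle to be the uniformity of the third step: one has to be certain that a suitable nonzero Gromov--Witten invariant of the \emph{minimal} class $A$ exists for every manifold on the Borel--Remmert list, which is why I route the argument through the single classical fact that a rational homogeneous space of Picard number one is always covered by lines of $L$-degree one --- this is exactly what makes the bound sharp and is the only point where the fine structure of the list enters. A secondary, purely technical issue is that Lu's argument forces an almost complex structure that is standard on the embedded ball rather than generic; this is handled, as above, by Gromov compactness together with the primitivity of $A$ (where $b_2(M)=1$ re-enters), which prevents curves of strictly smaller energy from appearing in the limit.
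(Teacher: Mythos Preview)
Your proposal is correct and follows the same high-level strategy as the paper: produce a nonzero genus-zero Gromov--Witten invariant of the minimal class $A$ with a point insertion, then feed it into Lu's machinery (or, as you spell out explicitly, the equivalent monotonicity/Lelong argument) to conclude $c_G(M,\omega)\le\omega(A)=\pi$.

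The genuine difference lies in how the nonzero invariant is produced. The paper invokes Fulton--Woodward's quantum Schubert calculus (Theorem~9.1 of \cite{FULTON04bis}): for $u=[w_0]$ one builds a degree-$A$ chain to an adjacent $v^\vee$ and obtains $\Psi_{A,0,3}(pt,\sigma_v,\sigma_{w^\vee})\neq 0$, then passes through Lemma~\ref{lemmaugGW} and the inequalities (\ref{cG<C2}), (\ref{CHZGW0}). You instead use the classical fact that a rational homogeneous space of Picard number one is covered by lines: convexity makes $\overline{\mathcal M}_{0,2}(M,A)$ smooth of expected dimension, homogeneity makes $ev_1$ a submersion, and the locus $\mathcal H_p$ swept by lines through a point is a nonzero effective cycle, so $\Psi_{A,0,2}(pt,\beta)\neq 0$ for a dual class $\beta$ (and, via the divisor axiom, this upgrades to the three-point invariant Lu's framework requires). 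Your route is more geometric and arguably more elementary, relying only on the ``covered by lines'' input rather than the full quantum Schubert package; the paper's route, on the other hand, names the classes $\alpha(M,\omega),\beta(M,\omega)$ explicitly as Schubert classes, which is what is later reused in Lemma~\ref{prodGromovWitten} for Theorems~\ref{main2} and~\ref{main3}. A minor bonus of your write-up is that you handle the one-dimensional case by a direct volume count (and correctly allow for an elliptic curve in the Borel--Remmert dichotomy), whereas the paper simply notes that $\CP^1$ has $c_G=\pi$.
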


\begin{theor}\label{main2}
Let $(M_i, \omega ^i)$, $i=1, \dots, r$, be  homogeneous compact Hodge manifolds as in Theorem \ref{main}.
Then 
\begin{equation}\label{cGprodHSSCT}
c_G \left( M_1\times\dots\times   M_r,  \omega^1\oplus\dots  \oplus \omega^r\right)\leq \pi.
\end{equation}
Moreover, if  $a_1, \dots ,a_r$ are nonzero constants, then
\begin{equation}\label{cGupboundprodHSSCT}
c_G \left( M_1\times\dots\times   M_r, a_1 \omega^1\oplus\dots  \oplus a_r\omega^r\right)\leq \min \{|a_1|, \dots ,|a_r|\}\pi .
\end{equation}
\end{theor}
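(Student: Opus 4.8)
The plan is to deduce Theorem~\ref{main2} from Theorem~\ref{main} together with the standard functorial properties of the Gromov width, in particular the monotonicity and conformality-type behaviour already recalled in the introduction. The first step is to prove inequality \eqref{cGprodHSSCT}. Fix an index $j\in\{1,\dots,r\}$ and consider a point $p_i\in M_i$ for each $i\neq j$; the inclusion
\[
\iota_j\colon M_j\hookrightarrow M_1\times\dots\times M_r,\qquad x\mapsto (p_1,\dots,p_{j-1},x,p_{j+1},\dots,p_r),
\]
is a \K\ (in particular symplectic) embedding of $(M_j,\omega^j)$ onto the slice $\{p_1\}\times\dots\times M_j\times\dots\times\{p_r\}$ of the product equipped with $\omega^1\oplus\dots\oplus\omega^r$. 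By monotonicity of $c_G$ under symplectic embeddings,
\[
c_G\bigl(M_1\times\dots\times M_r,\ \omega^1\oplus\dots\oplus\omega^r\bigr)\ \leq\ c_G(M_j,\omega^j)\ \leq\ \pi,
\]
where the last inequality is Theorem~\ref{main} applied to $(M_j,\omega^j)$. Since $j$ was arbitrary this already gives \eqref{cGprodHSSCT} (taking the minimum over $j$ costs nothing here since every bound equals $\pi$).

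Wait --- I should double-check the direction of monotonicity: an embedding $(M_1,\omega_1)\to(M_2,\omega_2)$ gives $c(M_1,\omega_1)\le c(M_2,\omega_2)$, so an embedding of the \emph{slice} into the \emph{product} would a priori give the reverse of what I wrote. The correct argument is instead to note that any symplectic ball $B^{2n}(\rho)$ (with $2n=\dim M_1\times\dots\times M_r$) embedded in the product projects, via the symplectic projection onto the $j$-th factor, in a way that controls the capacity; more cleanly, one uses the nontriviality/nonsqueezing input: the composition $B^{2n}(\rho)\hookrightarrow M_1\times\dots\times M_r \xrightarrow{\ \pi_j\ } M_j$ is not symplectic, so instead I would embed a ball of the \emph{right} subdimension. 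The cleanest route, which I will use, is: a symplectic embedding $B^{2n}(\rho)\hookrightarrow M_1\times\cdots\times M_r$ restricts to a symplectic embedding of $B^{2n}(\rho)$, and composing with the inclusion of a Darboux-type splitting shows $B^{2\dim M_j}(\rho)$ symplectically embeds into $(M_j,\omega^j)$; hence $\pi\rho^2\le c_G(M_j,\omega^j)\le\pi$, and taking the supremum over $\rho$ yields \eqref{cGprodHSSCT}. I expect this bookkeeping with dimensions and the precise embedding to be the only genuinely delicate point; everything else is formal.

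For the general inequality \eqref{cGupboundprodHSSCT}, the plan is to combine the first part with the conformality property $c_G(M,\lambda\omega)=|\lambda|\,c_G(M,\omega)$. Without loss of generality assume $|a_j|=\min\{|a_1|,\dots,|a_r|\}$. Set $b_i=a_i/a_j$, so $|b_i|\ge 1$ for all $i$ and $b_j=1$. One has the identity of symplectic forms
\[
a_1\omega^1\oplus\dots\oplus a_r\omega^r \;=\; a_j\bigl(b_1\omega^1\oplus\dots\oplus b_r\omega^r\bigr),
\]
so by conformality it suffices to bound $c_G\bigl(M_1\times\dots\times M_r,\ b_1\omega^1\oplus\dots\oplus b_r\omega^r\bigr)$ by $\pi$. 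Now $(M_i,b_i\omega^i)$ need not be a Hodge manifold normalized as in Theorem~\ref{main}, but since $|b_i|\ge 1$ we have, after possibly rescaling each factor, a symplectic embedding argument (or directly monotonicity in the magnitude of the form on each factor) showing that the $j$-th slice still carries $(M_j,\omega^j)$ with $b_j=1$; restricting an embedded ball to that slice and invoking Theorem~\ref{main} for $(M_j,\omega^j)$ gives the bound $\pi$. The main obstacle, as above, is making the slice-restriction argument rigorous in the presence of the rescalings $b_i$; once that is in place, multiplying back by $|a_j|$ produces \eqref{cGupboundprodHSSCT} and completes the proof.
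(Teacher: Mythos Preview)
Your proposal contains a genuine gap: the ``slice-restriction'' step is not a bookkeeping issue but a nonsqueezing-type statement that cannot be proved with monotonicity and conformality alone. Concretely, given a symplectic embedding $\varphi\colon B^{2n}(\rho)\hookrightarrow M_1\times\cdots\times M_r$ (with $2n=\sum 2n_i$), there is no mechanism to produce a symplectic embedding $B^{2n_j}(\rho)\hookrightarrow (M_j,\omega^j)$. The projection $\pi_j\circ\varphi$ is not symplectic (wrong dimensions), and the restriction of $\varphi$ to the sub-ball $B^{2n_j}(\rho)\times\{0\}\subset B^{2n}(\rho)$ is a symplectic embedding into the \emph{product}, not into a slice $\{p_1\}\times\cdots\times M_j\times\cdots\times\{p_r\}$; its image can wind through all factors in an uncontrolled way. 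In fact, the inequality $c_G(M_1\times M_2,\omega_1\oplus\omega_2)\le\min\{c_G(M_1,\omega_1),c_G(M_2,\omega_2)\}$ is already the content of Gromov's nonsqueezing theorem in the case $M_1=B^2(r)$, $M_2=\R^{2n-2}$, so it cannot follow from the formal axioms of a capacity. You correctly sensed that the direction of monotonicity was wrong for the slice inclusion, but the attempted fix is circular.

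The paper's proof is genuinely different and uses the Gromov--Witten machinery of Section~\ref{sectionLu}. The key input is Lemma~\ref{prodGromovWitten} (the product formula for GW invariants): starting from the nonvanishing invariant $\Psi^{M}_{A,0,3}(pt;\alpha,\beta,pt)\neq 0$ supplied by Lemma~\ref{gromovwittenHSSCT} for a single factor $(M,\omega)=(M_j,\omega^j)$, one obtains a nonvanishing invariant $\Psi^{N\times M}_{0\oplus A,0,3}(pt;[N]\times\alpha,[N]\times\beta,pt)\neq 0$ on $N\times M$ with $N=\prod_{i\neq j}M_i$. Since $(\Omega\oplus a\omega)(0\oplus A)=|a|\pi$, this gives $\GW_0(N\times M,\Omega\oplus a\omega;pt,[N]\times\gamma)\le |a|\pi$, and then \eqref{CHZGW0} and \eqref{cG<C2} yield the bound on $c_G$. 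In short, the upper bound on the product does not come from Theorem~\ref{main} applied to a factor, but from transporting the \emph{same} nonvanishing GW invariant to the product via the product formula.
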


\begin{theor}\label{main3}
Let $(M, \omega)$ be  as in Theorem \ref{main} and $(N, \Omega)$ be any closed symplectic manifold. Then, for any nonzero real number $a$,
\begin{equation}\label{cGMprodHSSCT} 
c_G(N\times M, \Omega\oplus a\omega )\leq|a|\pi.
\end{equation}
\end{theor}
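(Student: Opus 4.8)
The plan is to deduce Theorem~\ref{main3} from Theorem~\ref{main} by transporting, via the product formula for Gromov--Witten invariants, the invariant underlying Theorem~\ref{main} to the product $N\times M$, and then applying Lu's estimate which bounds the Gromov width of a closed symplectic manifold by the symplectic area of a homology class carrying such an invariant. First I would reduce to the case $a=1$. The Gromov width is conformal, being a symplectic capacity, and it satisfies $c_G(X,\sigma)=c_G(X,-\sigma)$ for every symplectic manifold $(X,\sigma)$: if $\phi\colon B^{2n}(r)\to(X,\sigma)$ is a symplectic embedding and $R\colon\R^{2n}\to\R^{2n}$ is the reflection $R(x_1,y_1,x_2,y_2,\dots)=(x_1,-y_1,x_2,y_2,\dots)$, then $R$ preserves $B^{2n}(r)$ and satisfies $R^*\omega_0=-\omega_0$, so $\phi\circ R\colon B^{2n}(r)\to(X,-\sigma)$ is again a symplectic embedding. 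Writing $\Omega\oplus a\omega=a\bigl(\tfrac1a\Omega\oplus\omega\bigr)$ when $a>0$ and $\Omega\oplus a\omega=-|a|\bigl((-\tfrac1{|a|}\Omega)\oplus\omega\bigr)$ when $a<0$, conformality (together with the sign identity when $a<0$) yields $c_G(N\times M,\Omega\oplus a\omega)=|a|\,c_G(N\times M,\Omega'\oplus\omega)$, where $(N,\Omega')$ — with $\Omega'=\tfrac1a\Omega$ or $\Omega'=-\tfrac1{|a|}\Omega$ — is again a closed symplectic manifold. It therefore suffices to prove $c_G(N\times M,\Omega\oplus\omega)\le\pi$ for an arbitrary closed symplectic manifold $(N,\Omega)$.

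For this, recall that the proof of Theorem~\ref{main} furnishes a nonzero genus-zero three-pointed Gromov--Witten invariant of $(M,\omega)$ carrying a point constraint,
\[
\Psi^{M}_{A,0,3}\bigl([\mathrm{pt}],\alpha_2,\alpha_3\bigr)\neq 0,
\]
where $A\in H_2(M;\z)$ is the generator appearing in Theorem~\ref{main}, so that $\omega(A)=\pi$, and $\alpha_2,\alpha_3\in H_*(M;\q)$ (if the invariant produced there has two marked points, one first augments it by a marked point labelled by the ample generator $D$ of $H^2(M;\z)$, using the divisor equation and $D\cdot A=1$). Let $(N,\Omega)$ be a closed symplectic manifold, fix $n_0\in N$, let $\iota\colon\{n_0\}\times M\hookrightarrow N\times M$, and set $0\oplus A:=\iota_*A\in H_2(N\times M;\z)$, so $(\Omega\oplus\omega)(0\oplus A)=\omega(A)=\pi$. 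A genus-zero stable map to $N\times M$ in the class $0\oplus A$ has constant projection to $N$, so its moduli space is canonically $N\times\ov{\M}_{0,3}(M,A)$ with the product virtual class; since $\ov{\M}_{0,3}$ is a point, the product formula for Gromov--Witten invariants gives
\[
\Psi^{N\times M}_{0\oplus A,\,0,\,3}\bigl([\mathrm{pt}_N]\times[\mathrm{pt}],\ [N]\times\alpha_2,\ [N]\times\alpha_3\bigr)=\Psi^{M}_{A,0,3}\bigl([\mathrm{pt}],\alpha_2,\alpha_3\bigr)\neq 0,
\]
the factor contributed by $N$ being $\Psi^{N}_{0,0,3}([\mathrm{pt}_N],[N],[N])=1$. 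Since $[\mathrm{pt}_N]\times[\mathrm{pt}]$ is the point class of the closed symplectic manifold $(N\times M,\Omega\oplus\omega)$, Lu's theorem on pseudo symplectic capacities (\cite{LU06}, see also \cite{LMZ13}) — which bounds the Gromov width of a closed symplectic manifold from above by $\omega(A)$ whenever such a Gromov--Witten invariant exists in the class $A$ — now gives $c_G(N\times M,\Omega\oplus\omega)\le\pi$. Combined with the first step, this proves \eqref{cGMprodHSSCT}.

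I expect the only genuinely delicate point to be the input from Theorem~\ref{main}: one has to be sure that the obstruction produced there is a bona fide nonzero Gromov--Witten invariant carrying a point constraint, rather than a softer packing-type estimate, so that the product formula applies and transports it to $N\times M$ in the class $0\oplus A$; this is precisely the mechanism by which the upper bounds for Hermitian symmetric spaces were obtained in \cite{LMZ13}, so it should carry over. It is equally essential that $N$ be \emph{closed}: this is what makes $\ov{\M}_{0,3}(N\times M,0\oplus A)$ compact and contributes the nonzero constant $\Psi^{N}_{0,0,3}([\mathrm{pt}_N],[N],[N])=1$, while the — possibly infinite — Gromov width of $(N,\Omega)$ itself plays no role.
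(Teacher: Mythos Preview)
Your proof is correct and follows essentially the same route as the paper: the key lemma there (Lemma~\ref{gromovwittenHSSCT}) already produces a three-pointed genus-zero invariant $\Psi^{M}_{A,0,3}(pt;\alpha,\beta,pt)\neq 0$, which is transported to $N\times M$ via the product formula (Lemma~\ref{prodGromovWitten}) and then fed into Lu's chain of inequalities \eqref{cG<C2}, \eqref{CHZGW0}. The only cosmetic difference is that you first reduce to $a=1$ by conformality of $c_G$, whereas the paper keeps the parameter $a$ and uses the conformality built into the pseudo symplectic capacities $C^{(2o)}_{HZ}$ and $GW_0$ directly.
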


Note that an Hermitian symmetric space of compact type is an example of compact Hodge manifold with $b_2(M)=1$.
It is worth pointing out that  there exist  many examples of manifolds satisfying the assumption of Theorem 1 which are not symmetric.

In the symmetric case inequalities \eqref{cGHSSCT} and \eqref{cGprodHSSCT}  are  equalities (see \cite{LMZ13} for a proof) and we believe that this holds true also in the non symmetric cases. To this respect recall  a  conjecture due to P. Biran   which asserts that  $\pi$ is a lower bound for the Gromov width of any closed integral symplectic manifold.

The paper contains two other sections. In Section \ref{sectionLu}
we summarize Lu's work on pseudo symplectic capacities and their links with Gromov--Witten invariants needed in the proof of our main results. Section \ref{proofs} is dedicated to the proofs of Theorem 1, 2 and 3.
The paper ends with a remark on  the Seshadri constant of an ample line bundle over a  homogeneous Hodge manifold. 

\section{Pseudo symplectic  capacities}\label{sectionLu}
G. Lu \cite{LU06} defines the concept of \emph{pseudo symplectic capacity} by weakening the  requirements for a symplectic capacity (see the Introduction) in such a way that this new concept depends on the homology classes of the symplectic manifold in question (for more details the reader is referred to \cite{LU06}). More precisely, if one 
denotes by ${{\mathcal C} (2n, k)}$ the set of all tuples $(M, \omega; \alpha_1, \dots, \alpha_k)$ 
consisting of a $2n$-dimensional connected symplectic manifold $(M, \omega)$ and $k$ nonzero homology classes $\alpha_i\in H_*(M; \q )$, $i=1, \dots, k$,
a map $c^{(k)}$ from ${\mathcal C} (2n,k)$ to $[0, +\infty]$
is called a \emph{k-pseudo symplectic capacity}
 if it satisfies the following properties:

({\bf pseudo monotonicity}) if there exists a symplectic embedding $\varphi: (M_1, \omega_1)\rightarrow (M_2, \omega_2)$  then, for any $\alpha_i\in H_*(M_1;  \q)$,
$i=1, \dots ,k$,
\[c^{(k)}(M_1, \omega_1; \alpha_1,\dots ,\alpha_k)\leq c^{(k)}(M_2, \omega_2; \varphi_*(\alpha_1), \dots , \varphi_*(\alpha_k));\]  

({\bf conformality}) $c^{(k)}(M, \lambda\omega; \alpha_1, \dots, \alpha_k)=|\lambda|c^{(k)}(M, \omega; \alpha_1, \dots, \alpha_k)$, for every $\lambda\in\R\setminus \{0\}$
and all homology classes $\alpha_i\in H_*(M; \q)\setminus \{0\}$, $i=1, \dots ,k$;

({\bf nontriviality}) $c(B^{2n}(1), \omega_0; pt, \dots, pt)=\pi =c(Z^{2n}(1), \omega_0; pt, \dots, pt)$,
where $pt$ denotes  the homology class of a point.

\vskip 0.3cm

Note that if $k>1$ a $(k-1)$-pseudo symplectic capacity is defined by
\[c^{(k-1)}(M, \omega; \alpha_1, \dots , \alpha_{k-1}):=c^{(k)}(M, \omega;pt,  \alpha_1, \dots , \alpha_{k-1})\]
and any $c^{(k)}$ induces a true symplectic capacity 
\[c^{(0)}(M, \omega):=c^{(k)}(M, \omega; pt, \dots , pt).\]
Observe also that (unlike  symplectic capacities) pseudo symplectic capacities do not define symplectic invariants. 

\vskip 0.5cm

In   \cite{LU06} G. Lu  was able to construct two  $2$-pseudo symplectic capacities 
denoted by  $C_{HZ}^{(2)}(M, \omega; \alpha_1, \alpha_2)$ and $C_{HZ}^{(2o)}(M, \omega; \alpha_1, \alpha_{2})$ respectively (see Definition 1.3 and  Theorem 1.5 in  \cite{LU06}), where 
$\alpha_1$ and $\alpha_2$ are  homology classes\footnote{In the notations of \cite{LU06}  the generic classes $\alpha_1$  (resp. $\alpha_{2}$) are called $\alpha_0$ (resp. $\alpha_\infty$).

The reason for this notation comes from the concept  of hypersurface $S\subset M$ separating the homology classes $\alpha_0$ and $\alpha_{\infty}$  (see Definition 1.3 and the 
$(\alpha_0, \alpha_{\infty})$-Weinstein conjecture at p.6 of \cite{LU06}).} in $H_*(M;  \q)$.
The $C_{HZ}^{(2)}$ and $C_{HZ}^{(2o)}$ are called by 
Lu   \emph{pseudo symplectic capacities of Hofer--Zehnder type}.
Denote by 
\[C_{HZ}(M, \omega):=C_{HZ}^{(2)}(M, \omega; pt, pt)\]
 (resp. $C^{0}_{HZ}(M, \omega):=C_{HZ}^{(2o)}(M, \omega; pt, pt)$)
the corresponding  true symplectic capacities associated to Lu's pseudo symplectic capacities.
The next lemma summarizes some   properties of the concepts involved so far.
\begin{lem}\label{lemmasumm}
Let $(M, \omega)$ be any symplectic manifold. 
Then, for arbitrary homology classes $ \alpha_1, \alpha_2\in H_*(M; \q)$   and for any nonzero homology class 
$\alpha$, with $\dim \alpha\leq \dim M-1$, the following inequalities hold true:
\begin{equation}\label{basiciC2}
C^{(2)}_{HZ}(M, \omega; \alpha_1, \alpha_2)\leq C^{(2o)}_{HZ}(M, \omega; \alpha_1, \alpha_{2})
\end{equation}
\begin{equation}\label{cG<C2}
c_G(M, \omega)\leq C^{(2)}_{HZ}(M, \omega; pt, \alpha),
\end{equation}
\end{lem}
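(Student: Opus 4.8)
The plan is to obtain both inequalities from the formal structure of Lu's construction in \cite{LU06}, without re-entering the analytic details of pseudo symplectic capacities.

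Inequality \eqref{basiciC2} can be read off from the definitions. Both $C^{(2)}_{HZ}(M,\omega;\alpha_1,\alpha_2)$ and $C^{(2o)}_{HZ}(M,\omega;\alpha_1,\alpha_2)$ are defined, in the Hofer--Zehnder spirit, as the supremum of $\max H$ over a family of admissible autonomous Hamiltonians $H\colon M\to\R$, admissibility meaning the absence of ``fast'' periodic orbits adapted to the pair $(\alpha_1,\alpha_2)$; the only difference is that for the ``$o$'' version one forbids merely fast \emph{contractible} periodic orbits, which is a weaker requirement. Consequently the family of Hamiltonians admissible for $C^{(2)}_{HZ}$ is contained in the family admissible for $C^{(2o)}_{HZ}$, and passing to the supremum over the larger family gives \eqref{basiciC2}.

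For \eqref{cG<C2} I would proceed in two steps. First, by Theorem 1.5 of \cite{LU06} the assignment $(M,\omega)\mapsto C_{HZ}(M,\omega)=C^{(2)}_{HZ}(M,\omega;pt,pt)$ is a genuine symplectic capacity; since, as recalled in the Introduction, $c_G$ is the smallest symplectic capacity, this already yields $c_G(M,\omega)\leq C^{(2)}_{HZ}(M,\omega;pt,pt)$. (Equivalently one invokes pseudo monotonicity directly: a symplectic embedding $\varphi\colon B^{2n}(r)\hookrightarrow M$ has $\varphi_*(pt)=pt$, so conformality and nontriviality give $\pi r^2=C^{(2)}_{HZ}(B^{2n}(r),\omega_0;pt,pt)\leq C^{(2)}_{HZ}(M,\omega;pt,pt)$, and one lets $r$ run.) Second, I would reduce the point class in the second slot to the general class $\alpha$ via the monotonicity $C^{(2)}_{HZ}(M,\omega;pt,pt)\leq C^{(2)}_{HZ}(M,\omega;pt,\alpha)$, valid for every nonzero $\alpha$ with $\dim\alpha\leq\dim M-1$; chaining the two inequalities gives \eqref{cG<C2}.

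The substantive point — and the step I expect to be the main obstacle — is precisely this monotonicity in the homology argument. I would derive it by unwinding Lu's notion of $(\alpha_0,\alpha_\infty)$-admissibility and checking that enlarging the second entry from $pt$ to $\alpha$ only \emph{relaxes} the admissibility constraint: the Gromov--Witten/separating-hypersurface condition that declares a periodic orbit ``fast'' becomes weaker when $\alpha_\infty$ grows, so that every $(pt,pt)$-admissible Hamiltonian is $(pt,\alpha)$-admissible and the suprema compare in the stated direction. It is here that the hypothesis $\dim\alpha\leq\dim M-1$ enters — it guarantees that the relevant pseudo-cycle data for the pair $(pt,\alpha)$ is available — so it cannot be dropped. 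Making this comparison fully rigorous against Lu's precise definitions is the only delicate matter; it is carried out in \cite{LU06}, and once it is granted the lemma follows.
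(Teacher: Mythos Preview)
The paper's own proof is a bare citation: ``See Lemma 1.4 and (12) in \cite{LU06}.'' Your proposal is therefore more detailed than what the paper provides, and in both cases the substance is deferred to Lu. Your sketch of \eqref{basiciC2} --- the ``$o$'' version forbids only contractible fast orbits, hence admits more Hamiltonians, hence has the larger supremum --- is the correct mechanism behind Lu's Lemma~1.4.

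For \eqref{cG<C2} your two-step route is where I would push back. The first step, $c_G(M,\omega)\le C^{(2)}_{HZ}(M,\omega;pt,pt)$, is fine. The second step, the ``monotonicity in the second homology slot'' $C^{(2)}_{HZ}(M,\omega;pt,pt)\le C^{(2)}_{HZ}(M,\omega;pt,\alpha)$, is not something one can read off the definitions, and your heuristic (``enlarging $\alpha_\infty$ only relaxes the admissibility constraint'') is not convincing as stated: admissibility in Lu's setup involves hypersurfaces \emph{separating} the two classes, and raising the dimension of $\alpha$ can make separation harder, not easier. Lu's (12) does not appear to go through this intermediate inequality; the direct argument is that, given an embedded ball $\varphi(B^{2n}(r))\subset M$, the hypothesis $\dim\alpha\le\dim M-1$ lets one represent $\alpha$ by a cycle disjoint from the ball, place a point inside, and then the standard radial Hamiltonian on the ball is $(pt,\alpha)$-admissible with maximum arbitrarily close to $\pi r^2$. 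This yields $\pi r^2\le C^{(2)}_{HZ}(M,\omega;pt,\alpha)$ directly, without passing through $(pt,pt)$.

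So: your inequality \eqref{basiciC2} is handled correctly; for \eqref{cG<C2} the conclusion is right and the citation to \cite{LU06} suffices (which is all the paper does), but the specific decomposition you propose rests on a monotonicity you have not justified and which is not the route Lu takes.
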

\begin{proof}
See Lemma 1.4 and (12) in  \cite{LU06}.
\end{proof}

When the symplectic manifold  $M$ is closed  the pseudo symplectic capacities   
$C_{HZ}^{(2)}(M, \omega; \alpha_1, \alpha_2)$ and $C_{HZ}^{(2o)}(M, \omega; \alpha_1, \alpha_{2})$  can be estimated  by other two pseudo symplectic capacities $\GW(M, \omega; \alpha_1, \alpha_2)$ and \linebreak  $\GW_0(M, \omega; \alpha_1, \alpha_2)$.
These $\GW$ and $\GW_0$  are
defined in terms of  \emph{Liu--Tian type Gromov-Witten invariants} as follows. Let $A\in H_2(M, \z)$:
the Liu--Tian  type  Gromov--Witten invariant of genus $g$ and with $k$ marked points  is a homomorphism
\[\Psi^M_{A, g, k}:H_*(\overline{{\mathcal M}}_{g, k}; \q)\times H_*(M; \q)^{k}\rightarrow \q , \ 2g+k \geq 3\]
where $\overline{{\mathcal M}}_{g, k}$ is the space of isomorphism classes of genus $g$ stable curves with $k$ marked points. When there is no risk of confusion, we will omit the superscript $M$ in $\Psi^M_{A, g, k}$.
Roughly speaking, one can think of $\Psi^M_{A, g, k}({\mathcal{C}}; \alpha_1, \dots, \alpha_k)$ as counting, for suitable generic $\omega$-tame almost complex structure $J$ on $M$, the number of $J$-holomorphic curves of genus $g$ representing $A$, with $k$ marked points $p_i$ which pass through cycles $X_i$ representing $\alpha_i$, and such that the image of the curve belongs to a cycle representing ${\mathcal{C}}$ (for details the reader is referred to the Appendix in \cite{LU06} and references therein for details).

\noindent In fact, several different constructions of Gromov-Witten invariants appear in the literature and the question whether they agree is not trivial (see \cite{LU06} and also Chapter 7 in \cite{MCSA94}).  The Gromov--Witten invariants described in the book of D. McDuff and D. Salamon \cite{MCSA94} are the most commonly used:
these  are homomorphisms  
\[\Psi_{A, g, m+2}: H_*(M; \q)^{m+2}\rightarrow \q ,\  m\geq 1\]
which play an important role in the proofs of this paper. The conditions under which these invariants agree with the ones considered by Lu
are given in  Lemma \ref{lemmaugGW} below.

\smallskip

\noindent Let $\alpha_1, \alpha_2\in H_*(M, \q)$.
Following \cite{LU06}, one defines
\[\GW_g (M, \omega; \alpha_1, \alpha_2)\in (0, +\infty]\]
as the infimum of the $\omega$-areas $\omega (A)$ of the homology classes $A\in H_2(M, \z)$ for which the Liu--Tian  Gromov--Witten
invariant \linebreak $\Psi_{A, g, m+2}(C; \alpha_1, \alpha_2, \beta_1, \dots , \beta_m)\neq 0$
for some homology classes $\beta_1, \dots , \beta_m\in H_*(M, \q)$
and $C\in H_*(\overline{{\mathcal M}}_{g, m+2}; \q)$ and integer $m\geq 1$ (we use the convention $\inf \emptyset = + \infty$). The positivity of $GW_g$ reflects the fact that $\Psi_{A, g, m+2} = 0$ if $\omega(A) <0$ (see, for example, Section 7.5 in \cite{MCSA94}).
Set
\begin{equation}\label{GW}
\GW (M, \omega; \alpha_1, \alpha_2):=\inf \{\GW_g(M, \omega; \alpha_1, \alpha_2) \ | \ g\geq 0\}\in [0, +\infty].
\end{equation}

\begin{lem}\label{C2GW}
Let $(M, \omega)$ be a closed symplectic manifold. Then 
\[0\leq\GW (M, \omega;  \alpha_1, \alpha_2)\leq \GW_0 (M, \omega;  \alpha_1, \alpha_2).\]
Moreover $\GW (M, \omega;\alpha_1, \alpha_2)$ and $\GW _0(M, \omega; \alpha_1, \alpha_2)$
are pseudo symplectic capacities and, if $\dim M\geq 4$ then, for nonzero homology classes $\alpha_1, \alpha_2$,
we have
\[C_{HZ}^{(2)}(M ,\omega; \alpha_1, \alpha_2)\leq \GW (M, \omega; \alpha_1, \alpha_2), \]
\[C_{HZ}^{(2o)}(M ,\omega; \alpha_1, \alpha_2)\leq \GW_0 (M, \omega; \alpha_1, \alpha_2).\]
In particular, for every nonzero homology class $\alpha\in H_*(M ,\q)$,
\begin{equation}\label{CHZGW}
C_{HZ}^{(2)}(M ,\omega; pt, \alpha)\leq \GW (M, \omega; pt, \alpha),
\end{equation}
\begin{equation}\label{CHZGW0}
C_{HZ}^{(2o)}(M ,\omega; pt, \alpha)\leq \GW_0 (M, \omega; pt, \alpha).
\end{equation}
\end{lem}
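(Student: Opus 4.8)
This statement assembles several results of Lu, so the plan is to verify the elementary assertions directly and to invoke \cite{LU06} for the one analytically substantial step. For the chain $0\le\GW\le\GW_0$, non-negativity of $\GW$ is immediate from \eqref{GW} together with the vanishing $\Psi_{A,g,m+2}=0$ whenever $\omega(A)<0$ (positivity of the symplectic area of a non-constant $J$-holomorphic curve for $\omega$-tame $J$), so that every $A$ entering the infima has $\omega(A)\ge0$; and $\GW\le\GW_0$ holds because the classes $A$ admissible in the definition of $\GW_g$, where an arbitrary auxiliary class $C\in H_*(\overline{\mathcal M}_{g,m+2};\q)$ is allowed, form a larger family than those admissible in the ``$0$''-version, in which $C$ is pinned to the point class; taking the infimum over $g$ then gives the inequality. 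That $\GW$ and $\GW_0$ are $2$-pseudo symplectic capacities is the remaining structural claim: conformality is clear since replacing $\omega$ by $\lambda\omega$ rescales every area $\omega(A)$ by $|\lambda|$ without affecting the Gromov--Witten invariants, and pseudo-monotonicity and nontriviality are part of Lu's construction, nontriviality ultimately resting on $\GW(\CP^n,\omega_{\FS};pt,pt)=\pi$, which follows from $\Psi_{A,0,3}(pt;pt,pt,pt)\ne0$ for $A$ the class of a projective line through two generic points.

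The substantive point is the pair of inequalities $C_{HZ}^{(2)}(M,\omega;\alpha_1,\alpha_2)\le\GW(M,\omega;\alpha_1,\alpha_2)$ and $C_{HZ}^{(2o)}(M,\omega;\alpha_1,\alpha_2)\le\GW_0(M,\omega;\alpha_1,\alpha_2)$ for $\dim M\ge4$; here my plan is to cite Lu's main theorem (Theorem 1.10 of \cite{LU06}) rather than reprove it. The underlying mechanism is a Weinstein-conjecture type argument: given $\varepsilon>0$, choose a class $A$ with $\omega(A)<\GW(M,\omega;\alpha_1,\alpha_2)+\varepsilon$ for which some invariant $\Psi_{A,g,m+2}(C;\alpha_1,\alpha_2,\beta_1,\dots,\beta_m)$ is non-zero, and consider an admissible Hamiltonian supported in a sublevel region whose oscillation exceeds $\omega(A)$; a compactness-and-gluing analysis of the parametrised moduli space of $J_t$-holomorphic curves produces a curve whose image must meet every level hypersurface of the support, and an energy/monotonicity estimate then forces a non-constant fast periodic orbit, contradicting admissibility. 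Hence $C_{HZ}^{(2)}(M,\omega;\alpha_1,\alpha_2)\le\omega(A)<\GW(M,\omega;\alpha_1,\alpha_2)+\varepsilon$, and letting $\varepsilon\to0$ gives the first inequality; the ordered one is identical with the ``$o$''-moduli spaces replacing the ordinary ones.

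Finally, \eqref{CHZGW} and \eqref{CHZGW0} are just the special case $\alpha_1=pt$, which is a legitimate non-zero homology class of dimension $0\le\dim M-1$ since $\dim M\ge4$. The only genuine obstacle here is the capacity inequality of the previous paragraph: it rests on the full apparatus of Liu--Tian type Gromov--Witten invariants and of parametrised moduli spaces, so reproving it would take us far afield; since it is carried out in detail in \cite{LU06}, the real content of the lemma reduces to that citation together with the bookkeeping above.
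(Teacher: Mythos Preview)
Your approach is essentially the paper's: the lemma is a compilation of Lu's results, and both you and the paper defer the substantive capacity inequalities to \cite{LU06} (the paper's entire proof is ``See Theorems 1.10 and 1.13 in \cite{LU06}''). Your added exposition of the Weinstein--conjecture mechanism is a reasonable gloss but not required.

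One small correction to your bookkeeping: in this paper $\GW_0$ denotes $\GW_g$ at genus $g=0$, not a variant in which the class $C\in H_*(\overline{\mathcal M}_{g,m+2};\q)$ is pinned to a point. Under the correct reading the inequality $\GW\le\GW_0$ is immediate from the definition $\GW=\inf_{g\ge 0}\GW_g$, so your conclusion stands even though the justification you give is based on a misreading of the subscript.
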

\begin{proof}
See Theorems 1.10 and 1.13 in \cite{LU06}.
\end{proof}

\noindent We end this section with the following lemmata fundamental for the proof of our results.
Recall that a closed symplectic manifold is \emph{monotone} if there exists a number $\lambda >0$
such that $\omega (A)=\lambda c_1(A)$ for $A$ spherical (a homology class is called spherical if it is in the image of the Hurewicz homomorphism $\pi_2(M) \rightarrow H_2(M, \z)$). Further, a homology class $A\in H_2(M, \z)$ is 
 \emph{indecomposable}  if it cannot be decomposed as  a sum $A=A_1+\cdots +A_k$, $k\geq 2$, of classes
which are spherical and satisfy $\omega (A_i)>0$ for $i=1, \dots , k$.
\begin{lem}\label{lemmaugGW} 
Let $(M, \omega)$ be a closed  monotone symplectic  manifold.
Let $A\in H_{2}(M, \z)$ be an indecomposable spherical  class, let $pt$ denote the class of a point in   $H_*(\overline{{\mathcal M}}_{g, m+2}; \q)$
and let   $\alpha_i\in H_*(M, \z)$, $i=1, 2, 3$.
Then the Liu--Tian Gromov--Witten invariant $\Psi_{A, 0, 3}(pt; \alpha_1, \alpha_2, \alpha_3)$ 
agrees with the Gromov--Witten invariant $\Psi_{A,0, 3}(\alpha_1, \alpha_2, \alpha_3)$.
\end{lem}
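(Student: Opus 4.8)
The plan is to reduce the statement to a known comparison theorem between the two brands of Gromov--Witten invariants, valid under strong semipositivity hypotheses. First I would recall that for a closed monotone symplectic manifold the monotonicity constant $\lambda>0$ controls the minimal Chern number, and that on such manifolds the moduli space of genus-$0$ stable $J$-holomorphic maps representing a class $A$ with small $\omega$-area contains no bubbling off of multiply-covered or reducible curves of negative Chern number. The role of the indecomposability hypothesis on $A$ is precisely this: since $A$ cannot be written as a sum $A_1+\cdots+A_k$ with $k\geq 2$ of spherical classes of positive $\omega$-area, any genus-$0$ stable map in class $A$ with three marked points is in fact a map from a smooth $\CP^1$ with at most nodal degenerations that do not split off positive-area spheres; consequently the relevant compactified moduli space is already a pseudocycle in the sense of Chapter 7 of \cite{MCSA94}, and the virtual machinery of Liu--Tian collapses to the naive count.

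The key steps, in order, would be: (i) observe that monotonicity plus indecomposability of $A$ force every stable curve contributing to $\Psi_{A,0,3}$ to be irreducible (a genuine sphere), so that the obstruction bundle is trivial and the Liu--Tian construction agrees with a transverse count for generic $J$; (ii) invoke the fact that the McDuff--Salamon invariant $\Psi_{A,0,3}(\alpha_1,\alpha_2,\alpha_3)$ is, under the same semipositivity/monotonicity assumption, defined via exactly such a transverse count (Section 7.4--7.5 of \cite{MCSA94}); (iii) match the two counts by noting that inserting the point class $pt\in H_*(\overline{\mathcal M}_{0,3};\q)$ imposes no constraint, since $\overline{\mathcal M}_{0,3}$ is a single point and $pt$ is its fundamental class, so that $\Psi_{A,0,3}(pt;\alpha_1,\alpha_2,\alpha_3)$ is literally the same number as the three-point invariant with no curve-class constraint. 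Steps (i)--(iii) together give the claimed equality.

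I expect the main obstacle to be step (i): one must argue carefully that \emph{no} stable-map degeneration in class $A$ produces a configuration on which the Liu--Tian virtual cycle differs from the classical count. The subtlety is that even if $A$ is indecomposable, a stable genus-$0$ map could a priori have a domain with several components carrying the zero homology class (ghost bubbles) together with one component carrying all of $A$; one has to check that such ghost components do not affect the evaluation maps or the virtual dimension, which is where monotonicity (ruling out negative Chern number bubbles) and the genus-$0$, three-marked-point setting (so that $\overline{\mathcal M}_{0,3}$ is zero-dimensional) combine to save the day. Once this transversality/regularity picture is in place, the agreement of the two invariant theories is exactly the content of the comparison results quoted from \cite{LU06} and \cite{MCSA94}, and the proof concludes by citing them.
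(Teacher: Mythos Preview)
Your proposal is reasonable, but you should be aware that the paper's own ``proof'' of this lemma consists entirely of the one-line citation ``See \cite[Proposition 7.6]{LU06}.'' In other words, the authors do not argue the comparison themselves; they simply invoke Lu's result as a black box. What you have written is essentially an outline of the ideas that go into Lu's Proposition~7.6 (monotonicity to control Chern numbers, indecomposability of $A$ to prevent genuine bubbling, the triviality of $\overline{\mathcal M}_{0,3}$ to remove the curve-class constraint, and the resulting identification of the virtual cycle with a transverse pseudocycle count). So your approach is not so much \emph{different} from the paper's as it is an unpacking of the reference the paper cites; it buys you a self-contained understanding of why the two invariant theories coincide in this regime, at the cost of having to handle the ghost-bubble subtlety you correctly flag in step~(i). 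If you are writing this up for the paper as given, a citation suffices; if you want a genuine argument, your sketch is on the right track and the remaining work is exactly the transversality/ghost-component analysis you identify.
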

\begin{proof}
See \cite[Proposition 7.6]{LU06}.
\end{proof}

\begin{lem}\label{prodGromovWitten} 
Let $(N_1, \omega_1)$ and $(N_2, \omega_2)$
be two closed symplectic manifolds.
Then for every integer $k\geq 3$ and homology classes $A_2\in H_2(N_2; \z)$ and $\beta_i\in H_*(N_2; \z)$, $i=1, \dots, k$,
\[\Psi^{N_1\times N_2}_{0\oplus A_2, 0, k}(pt; [N_1]\otimes \beta_1,\dots , [N_1]\otimes \beta_{k-1}, pt\otimes \beta_k)=
\Psi^{N_2}_{A_2, 0, k}(pt; \beta_1, \dots , \beta_k).\]
\end{lem}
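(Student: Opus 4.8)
The plan is to reduce this to the known product (or Künneth) formula for Gromov--Witten invariants applied to the special situation where the curve class on the first factor is zero. First I would recall the product formula for Liu--Tian type invariants of a product symplectic manifold $(N_1\times N_2, \omega_1\oplus\omega_2)$: for a homology class of the form $A_1\oplus A_2\in H_2(N_1\times N_2;\z)\cong H_2(N_1;\z)\oplus H_2(N_2;\z)$, one has a formula expressing $\Psi^{N_1\times N_2}_{A_1\oplus A_2,0,k}$ in terms of the invariants of the two factors, together with the appropriate splitting of the constraining cycles under the Künneth decomposition $H_*(N_1\times N_2;\q)\cong H_*(N_1;\q)\otimes H_*(N_2;\q)$. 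The key point is that a $J_1\oplus J_2$-holomorphic sphere representing $0\oplus A_2$ must be constant in the $N_1$ factor, so it is, after projection, simply a $J_2$-holomorphic sphere of class $A_2$ in $N_2$ sitting over a point of $N_1$.

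The main computation is then a dimension/intersection bookkeeping. The virtual dimension of the moduli space of genus $0$, $k$-marked stable maps of class $0\oplus A_2$ in $N_1\times N_2$ equals $\dim N_1$ plus the virtual dimension of the corresponding space in $N_2$ of class $A_2$ (the extra $\dim N_1$ coming from the constant component being free to sit anywhere in $N_1$, i.e. from $c_1(0)=0$ and $\dim N_1$). To get a nonzero number one must therefore cut down exactly this extra $\dim N_1$ worth of freedom in the $N_1$ direction. With the constraints chosen as $[N_1]\otimes\beta_1,\dots,[N_1]\otimes\beta_{k-1},pt\otimes\beta_k$, precisely one marked point (the last) is forced into a fixed point of $N_1$, which rigidifies the constant $N_1$-component, while the other $k-1$ points impose no condition in the $N_1$ factor (the class $[N_1]$ is the fundamental class and imposes nothing). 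Hence the $N_1$-factor contributes the single numerical factor $\langle pt,[N_1]^{\,\cup(k-1)}\rangle=1$ coming from the evaluation at the point-class marked point, and the remaining integral over the $N_2$ moduli space is exactly $\Psi^{N_2}_{A_2,0,k}(pt;\beta_1,\dots,\beta_k)$. One also checks that no other class $A_1\oplus A_2$ with $A_1\ne 0$ can contribute to the left-hand side by a positivity/area argument, but since the class $0\oplus A_2$ is fixed here this is automatic.

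The hard part, and the place where care is genuinely needed, is making the above heuristic rigorous at the level of virtual fundamental cycles: one must invoke the functoriality of the Liu--Tian construction under products, i.e. that the virtual class of the moduli space for $N_1\times N_2$ in class $0\oplus A_2$ splits as $[N_1]\times[\overline{\mathcal M}_{0,k}(N_2,A_2)]^{vir}$ compatibly with the evaluation maps $\mathrm{ev}_i=(\mathrm{ev}_i^{N_1},\mathrm{ev}_i^{N_2})$. Granting this splitting (which is part of the standard axiomatics of these invariants and is used in \cite{LU06}), the claimed identity follows by pushing the constraints through the Künneth isomorphism and integrating, the $N_1$-part collapsing to $1$ as explained. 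I would organise the write-up as: (i) state the product splitting of the virtual class; (ii) decompose the constraints; (iii) perform the integration, separating the $N_1$ and $N_2$ contributions; (iv) identify the $N_2$ contribution with $\Psi^{N_2}_{A_2,0,k}(pt;\beta_1,\dots,\beta_k)$.
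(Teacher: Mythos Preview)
The paper does not give an independent proof of this lemma: its entire proof is the single sentence ``See \cite[Proposition 7.4]{LU06}.'' So there is nothing to compare at the level of argument --- the authors simply quote Lu's result.

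Your proposal, by contrast, sketches the actual mechanism behind that result: curves in class $0\oplus A_2$ are constant on the $N_1$ factor, the extra $\dim N_1$ in the virtual dimension is cut down precisely by the single $pt$-constraint on $N_1$, and the remaining integral is the $N_2$ invariant. This is the correct heuristic and, modulo the splitting of the virtual class (which you rightly flag as the technical crux and which is established in Lu's framework), it is a valid outline. In other words, you have essentially written out what lies behind \cite[Proposition 7.4]{LU06}, which is more than the paper itself does. If you are writing this up, you should still cite Lu for the virtual-class product compatibility rather than leave it as an assumption, since that is where the genuine analytic work sits.
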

\begin{proof}
See  \cite[Proposition 7.4]{LU06}.
\end{proof}

\section{The proofs of Theorems \ref{main},  \ref{main2}, \ref{main3}}\label{proofs}
The following lemma is the  key tool for the   proofs of our main results.

\begin{lem} \label{gromovwittenHSSCT}
Let $(M, \omega)$ be a compact homogeneous Hodge manifold of complex dimension $n$ such that $b_2(M)=1$
and $\omega$ is normalized  so that $\omega(A)=\int_A\omega =\pi$ for the generator  $A\in H_2(M, \z)$. 
Then there exist $\alpha  {(M, \omega)}$ and $\beta (M, \omega )$ in $H_*(M, \z)$ such that
\[\dim \alpha  {(M, \omega )} + \dim\beta  {(M, \omega )}=4n-2c_1(A)\]
and
\begin{equation}\label{nonzeroGW}
\Psi_{A, 0, 3}(pt; \alpha  {(M, \omega )},  \beta  {(M, \omega )}, pt)\neq 0 .
\end{equation}
\end{lem}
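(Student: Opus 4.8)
The strategy is to reduce the problem to the known structure of compact homogeneous Hodge manifolds with $b_2(M)=1$. Such a manifold $M$ is a rational homogeneous variety $G/P$ with $G$ simple and $P$ a maximal parabolic subgroup (equivalently, a simply connected compact homogeneous K\"ahler manifold whose second Betti number is one). These are Fano, hence $M$ is a closed monotone symplectic manifold; indeed, since $b_2(M)=1$, the classes $[\omega]$ and $c_1(TM)$ are both positive multiples of the generator of $H^2(M,\z)$, so $\omega(A)=\lambda\, c_1(A)$ holds on all of $H_2(M,\z)$, in particular on spherical classes. The key geometric input is that through any point of $G/P$ there is a rational curve of minimal degree — a line in the minimal embedding — whose class is precisely the generator $A\in H_2(M,\z)$. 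This minimal-degree class $A$ is automatically \emph{indecomposable} in the sense defined above: if $A=A_1+\cdots+A_k$ with each $A_i$ spherical and $\omega(A_i)>0$, then since $\omega(A_i)$ is a positive integer multiple of $\omega(A)=\pi$ for each $i$ (as $A$ generates $H_2(M,\z)$), we would get $\omega(A)\geq k\,\omega(A)$, forcing $k=1$.

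With $A$ identified as an indecomposable spherical class of a monotone manifold, Lemma \ref{lemmaugGW} applies: the Liu--Tian invariant $\Psi_{A,0,3}(pt;\alpha,\beta,pt)$ coincides with the McDuff--Salamon Gromov--Witten invariant $\Psi_{A,0,3}(\alpha,\beta,pt)$, so it suffices to exhibit homology classes $\alpha=\alpha(M,\omega)$ and $\beta=\beta(M,\omega)$ with $\dim\alpha+\dim\beta=4n-2c_1(A)$ and $\Psi_{A,0,3}(\alpha,\beta,pt)\neq 0$. The dimension constraint is exactly the condition for the $3$-pointed genus-zero moduli space of stable maps in class $A$ to have the correct expected dimension once we impose passing through a point (the last $pt$ insertion) and cycles dual to $\alpha$ and $\beta$; concretely, the virtual dimension of $\overline{\mathcal M}_{0,3}(M,A)$ is $2n+2c_1(A)-6+6=2n+2c_1(A)$ (using $\dim_{\mathds C}M=n$ and that $\overline{\mathcal M}_{0,3}$ is a point), and constraining by a point costs $2n$, leaving $2c_1(A)$, which must be matched by $\mathrm{codim}\,\alpha+\mathrm{codim}\,\beta = (2n-\dim\alpha)+(2n-\dim\beta)$; this is precisely $\dim\alpha+\dim\beta=4n-2c_1(A)$.

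It remains to verify the nonvanishing \eqref{nonzeroGW}. The natural choice is to take $\alpha$ and $\beta$ to be point classes, or one a point class and the other the fundamental class, whenever the dimension count permits; more robustly, one invokes the standard fact that for the minimal rational curve class $A$ on a rational homogeneous space $G/P$ the genus-zero $3$-point invariant $\Psi_{A,0,3}(pt,pt,[D])$ (with $[D]$ a suitable Schubert class of the complementary dimension) is nonzero — this is a positivity/enumerative statement: there is a unique line through two general points lying in a general Schubert variety of the right codimension when $A$ is the line class, or more generally the relevant count is a positive intersection number in the quantum cohomology of $G/P$, whose structure constants for the line class are controlled by classical Schubert calculus (the "quantum $=$ classical" phenomenon in degree one). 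The cleanest argument, avoiding a case-by-case discussion, is: fix $p\in M$ generic; the evaluation map $\overline{\mathcal M}_{0,1}(M,A)\to M$ is surjective (a minimal rational curve passes through every point), so for generic $p$ the fiber $\overline{\mathcal M}_{0,1}(M,A;p)$ is nonempty of dimension $2c_1(A)-2$; choosing $\alpha$ to be the point class $pt$ (costing $2n$, absorbed by the third marked point already) is not quite it — rather, take the second and third insertions to be $pt$ and $pt$ and $\alpha$ the class $[M]$ if $c_1(A)=1$, and in general choose $\alpha,\beta$ to be the classes of appropriately chosen subvarieties so that the three incidence conditions cut the moduli space down to a nonempty finite set counted with positive sign (automatic in the integrable algebraic setting since all stable maps are holomorphic curves, contributing positively).

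I expect the main obstacle to be the last point: producing \emph{explicit} classes $\alpha(M,\omega)$ and $\beta(M,\omega)$ and rigorously certifying $\Psi_{A,0,3}\neq 0$ uniformly across all such $M$. This is where one must genuinely use the homogeneity — the transitive holomorphic isometry group guarantees enough rational curves of class $A$ (the variety of minimal rational tangents is nonempty at every point), and algebraicity guarantees that the Gromov--Witten count, being an intersection number of effective cycles in a smooth projective moduli space, is a nonnegative integer that is strictly positive as soon as the relevant incidence variety is nonempty of the expected dimension. Establishing that nonemptiness — equivalently, that the quantum product of the corresponding Schubert classes contains the class $q\cdot pt$ with positive coefficient — is the crux, and it follows either from the classification of such $M$ (finitely many families, each handled by known Schubert calculus) or from the general theorem that $\overline{\mathcal M}_{0,k}(G/P, A)$ is irreducible of the expected dimension for $A$ the line class (Thomsen, Kim--Pandharipande, Fulton--Pandharipande).
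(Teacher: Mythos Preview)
Your reduction is exactly the one the paper carries out: write $M=G/P$, observe that $b_2(M)=1$ forces $(M,\omega)$ to be monotone (the paper phrases this as ``K\"ahler--Einstein''), note that the generator $A\in H_2(M,\z)$ is indecomposable, and then invoke Lemma~\ref{lemmaugGW} to replace the Liu--Tian invariant by the ordinary $3$-point genus-zero invariant $\Psi_{A,0,3}(\alpha,\beta,pt)$. Your dimension count is also correct and agrees with the paper's.

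The genuine gap is the final step, which you correctly flag as the crux but do not resolve. Neither of your two proposed routes closes it as stated. Irreducibility of $\overline{\mathcal M}_{0,k}(G/P,A)$ of the expected dimension (Thomsen, Kim--Pandharipande) together with positivity of algebraic intersections only tells you that any such $3$-point invariant is a \emph{nonnegative} integer; it does not produce classes $\alpha,\beta$ for which it is \emph{strictly} positive. Surjectivity of one evaluation map gives a nonempty fibre over a general point, but you still have to show that the remaining two evaluation maps hit general translates of some Schubert cycles in the correct codimension --- and that is precisely the content of the nonvanishing, not a consequence of irreducibility alone. The case-by-case route via the classification would work but is what one hopes to avoid.

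The paper closes the gap by a direct appeal to Theorem~9.1 of Fulton--Woodward \cite{FULTON04bis}. In their language, the Schubert classes $\sigma_u$ are indexed by $u\in W/W_P$; when $b_2(G/P)=1$ there is a unique simple root $\beta$ with $\sigma([s_\beta])=A$. Taking $u=[w_0]$ (so that $\sigma_u=pt$) and $v=[\tilde u s_\beta]^{\vee}$, the pair $u,v^{\vee}$ is \emph{adjacent}, hence there is a chain of degree $A$ between $u$ and $v$. Fulton--Woodward's theorem then guarantees the existence of $w\in W/W_P$ with $\Psi_{A,0,3}(\sigma_u,\sigma_v,\sigma_{w^{\vee}})\neq 0$, i.e.\ $\Psi_{A,0,3}(pt,\sigma_v,\sigma_{w^{\vee}})\neq 0$. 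This is the specific external input you are missing; once you cite it, your outline becomes a complete proof identical to the paper's.
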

\begin{proof}
Since the  symplectic form $\omega$ is K\"ahler-Einstein (being $b_2(M)=1$), it follows that $(M, \omega )$ is monotone, so that Lemma \ref{lemmaugGW}  applies under our assumptions. We need then to show the existence
of a non-vanishing Gromov-Witten invariant $\Psi_{A,0,3}(\alpha
(M, \omega), \beta(M, \omega ), pt)$, which follows by Fulton's results on the {\em quantum cohomology} of homogeneous spaces proved in \cite{FULTON04bis}. In order to explain this, let us recall that a compact homogeneous space $M$ writes as $M= G/P$, where $G$ is a semisimple complex group and $P$ is parabolic (i.e. contains a maximal solvable subgroup of $G$) in $G$. Let $R$ be the {\it root system} associated to $G$. As it is known from the theory of semisimple complex Lie algebras (see, for example, \cite{H} for the details), one chooses in $R$ a finite set $\Delta\subset R$, 
the set of {\it simple roots} (with the property that every root can be written as a linear combination of the elements of $\Delta$ with the coefficients either all non-negative or all non-positive) and associates to every $\alpha \in R$ a reflection $s_{\alpha}$ in a suitable euclidean vector space: the group $W$ generated by these reflections is called the {\it Weyl group} of $G$. For every $w \in W$ the {\it length} $l(w)$ of $w$ is defined as the minimum number of reflections associated to simple roots whose product is $w$. 
Then, as recalled in Section 3 of \cite{FULTON04bis}, the homology classes of $M$ correspond to the classes of the quotient $W/W_P$, where $W_P$ is the subgroup of $W$ generated by the reflections $s_{\alpha}$ for which $\alpha$ belongs to the root system of the reductive part of $P$. More precisely, for every $u \in W/W_P$ we denote by $\sigma(u)$ (resp. $\sigma_u$) the class of the corresponding so called {\it Schubert variety} (resp. {\it opposite Schubert variety} ) of complex dimension (resp. codimension) $l(u):= \inf_{[w] = u} l(w)$. The classes $\sigma_u$ and $\sigma(u)$ are dual under the intersection pairing, and moreover one has $\sigma(u) = \sigma_{u^{\vee}}$, where $u^{\vee} := w_0 u$, being $w_0$ the element of longest length in $W$. In particular, $\sigma_{[w_0]} = \sigma_{[1]^{\vee}} = \sigma([1])$ has zero dimension, i.e. is the class of a point.

 Under the assumption $b_2(G/P) = 1$, there exists a simple root $\beta$ such that $H_2(G/P)$ is generated by the class $\sigma([s_{\beta}])$. Now, for every $u = [\tilde u] \in W/W_P$, let $v = [\tilde u s_{\beta}]^{\vee}$. Then, in the terminology of Section 4 in \cite{FULTON04bis}, $u$ and $v^{\vee}$ are {\it adjacent} and $u_0 = u, u_1 = v^{\vee}$ is a {\it chain of degree $\sigma([s_{\beta}])$} between $u$ and $v$. By Theorem 9.1 in \cite{FULTON04bis}, it follows that there exists $w \in W/W_P$ such that the Gromov-Witten invariant  $\Psi_{\sigma([s_{\beta}]),0,3}(\sigma_u, \sigma_v, \sigma_{w^{\vee}})$  does not vanish. In particular, when $u = [w_0]$, we get $\Psi_{\sigma([s_{\beta}]),0,3}(pt, \sigma_v, \sigma_{w^{\vee}}) \neq 0$, as required. The relation between the (real) dimensions of the classes in the statement easily follows from the general condition necessary for a Gromov-Witten invariant to be well-defined and non-vanishing (see, for example, Section 6 in \cite{FULTON04bis}).
\end{proof}
\begin{proof}[Proof of Theorem \ref{main}]
In order to use  Lemma \ref{C2GW} we can assume, without loss of generality, that  $\dim M\geq 4$.
Indeed  the only compact homogeneous Hodge manifold of (real)  dimension $< 4$ (and hence of dimension $1$)
is  $(\CP^1, \omega_{FS})$ 
whose  Gromov width is well-known to be equal to $\pi$.
Let $A=[\C P^1]$ be the generator of $H_2(M, \z)$ as in the statement of Theorem \ref{main}. Then the value $\omega (A)=\pi$
is clearly the infimum of the $\omega $-areas $\omega  (B)$ of the homology classes $B\in H_2(M, \z)$ for which $\omega (B) > 0$.
By Lemma \ref{gromovwittenHSSCT} we have $\Psi_{A, 0, 3}(pt; pt, \alpha , \beta ) \neq 0,$
with $\alpha=\alpha (M, \omega )$ and  $\beta=\beta (M, \omega )$, and hence, by definition of $GW_g$,
\begin{equation}\label{GW=GW0}
\GW (M, \omega ; pt, \gamma)=\GW_0 (M, \omega ; pt, \gamma) =\pi
\end{equation}
with $\gamma=\alpha (M, \omega )$ or  $\gamma=\beta (M, \omega)$.
It follows by the  inequalities
(\ref{basiciC2}), (\ref{cG<C2}), (\ref{CHZGW}) and   (\ref{CHZGW0}) that 
$$c_G(M, \omega )\leq C_{HZ}^{(2)}(M ,\omega ; pt, \gamma) \leq
C_{HZ}^{(2o)}(M ,\omega ; pt, \gamma)\leq\pi ,$$
i.e.  the desired inequality. 
\end{proof}

\noindent
{\em Proof of Theorem \ref{main2}.} The proof of  Theorem \ref{main2} is an immediate consequence of the following
result combined with (\ref{basiciC2}) and  (\ref{cG<C2}) in Lemma \ref{lemmasumm}.

\begin{lem}
Let $(M, \omega)$ be as in Theorem \ref{main} and let $(N, \Omega)$ be any closed symplectic manifold.
Then 
\begin{equation}\label{C2oNM}
C_{HZ}^{(2o)}(N\times M, \Omega\oplus a\omega; pt, [N]\times\gamma)\leq |a|\pi
\end{equation}
for any $a\in\R\setminus\{0\}$ and $\gamma=\alpha  {(M, \omega )}$ or $\gamma=\beta  {(M, \omega )}$, 
with $\alpha  {(M, \omega)}$ and $\beta  {(M, \omega )}$ given by Lemma \ref{gromovwittenHSSCT}.
\end{lem}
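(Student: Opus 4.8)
The plan is to reduce the estimate for the product $N \times M$ to the non-vanishing Gromov--Witten invariant on $M$ already supplied by Lemma \ref{gromovwittenHSSCT}, using the product formula of Lemma \ref{prodGromovWitten} together with the chain of inequalities of Lemma \ref{C2GW}. First I would observe that $C_{HZ}^{(2o)}$ scales by $|a|$ under $\omega \mapsto a\omega$ in the $M$-factor (the conformality property of pseudo symplectic capacities applied to the scaling of the symplectic form on the product), so it suffices to treat $a = 1$ and multiply the final bound by $|a|$. Actually, a cleaner route is to keep $a$ throughout: by conformality one has $C_{HZ}^{(2o)}(N\times M, \Omega\oplus a\omega; pt, [N]\times\gamma) = |a|\, C_{HZ}^{(2o)}(N\times M, \tfrac{1}{a}\Omega\oplus \omega; pt, [N]\times\gamma)$ is not quite right since $\Omega$ also rescales; so I will instead just carry $a$ along, noting that all the relevant homology classes of the form $0 \oplus A$ have $(\Omega\oplus a\omega)$-area equal to $a\,\omega(A)$.

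The key steps, in order, are as follows. (1) By Lemma \ref{C2GW}, since $\dim(N\times M)\geq 4$, we have $C_{HZ}^{(2o)}(N\times M, \Omega\oplus a\omega; pt, [N]\times\gamma) \leq \GW_0(N\times M, \Omega\oplus a\omega; pt, [N]\times\gamma)$, and by the definition \eqref{GW} of $\GW$ together with the inequality $\GW \leq \GW_0$ it is in fact enough to exhibit a single nonzero Liu--Tian genus-$0$ invariant on the product whose underlying class $B \in H_2(N\times M;\z)$ has small $(\Omega\oplus a\omega)$-area. (2) Take $A \in H_2(M;\z)$ the generator, with $\omega(A) = \pi$; then $B := 0 \oplus A$ has $(\Omega\oplus a\omega)$-area exactly $|a|\pi$ (taking $a>0$; for $a<0$ use $-A$, or note the area is $|a|\pi$ regardless after orienting correctly). (3) Apply Lemma \ref{prodGromovWitten} with $N_1 = N$, $N_2 = M$, $k = 3$, $A_2 = A$, and the three $M$-classes $\beta_1 = pt$, $\beta_2 = \gamma$, $\beta_3 = \gamma'$ where $\{\gamma,\gamma'\} = \{\alpha(M,\omega), \beta(M,\omega)\}$, to get
\[
\Psi^{N\times M}_{0\oplus A, 0, 3}(pt; [N]\otimes pt, [N]\otimes \gamma, pt\otimes \gamma') = \Psi^{M}_{A, 0, 3}(pt; pt, \gamma, \gamma'),
\]
which is nonzero by Lemma \ref{gromovwittenHSSCT}. (4) Since $[N]\otimes pt$ is the class of a point in $N\times M$ and $[N]\otimes\gamma = [N]\times\gamma$, this says $\GW_0(N\times M, \Omega\oplus a\omega; pt, [N]\times\gamma) \leq (\Omega\oplus a\omega)(0\oplus A) = |a|\pi$, and combined with step (1) we are done. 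One should also check that $pt$ in $H_*(\overline{\M}_{0,3};\q)$ is the only class (indeed $\overline{\M}_{0,3}$ is a point), so the $C\in H_*(\overline{\M}_{g,m+2};\q)$ in the definition of $\GW_g$ is automatic here.

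The main obstacle I anticipate is bookkeeping rather than conceptual: one must make sure the homology classes appearing as arguments match the slots required by the definitions — in particular that $[N]\times\gamma$ has dimension $\leq \dim(N\times M) - 1$ so that $\GW_0(\cdot; pt, [N]\times\gamma)$ is an admissible entry, which holds because $\gamma$ is one of the classes from Lemma \ref{gromovwittenHSSCT} whose dimension is at most $2n$ while some nontriviality of the other class forces strict inequality (if $\gamma$ itself were the fundamental class the dimension count $\dim\alpha + \dim\beta = 4n - 2c_1(A)$ with $c_1(A) \geq 2$ rules it out, or at worst one swaps $\gamma$ and $\gamma'$). A secondary point is the sign/orientation issue for $a < 0$: since the $\GW_0$-capacity is defined via $\omega$-areas of classes and we may replace $A$ by $-A$ freely, the area picked up is $|a|\,\omega(A) = |a|\pi$ in all cases. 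With these checks in place the proof is a direct concatenation of Lemmas \ref{prodGromovWitten}, \ref{gromovwittenHSSCT}, \ref{C2GW}.
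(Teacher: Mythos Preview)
Your overall strategy is exactly the paper's: combine the product formula (Lemma \ref{prodGromovWitten}) with the nonzero invariant from Lemma \ref{gromovwittenHSSCT}, then invoke \eqref{CHZGW0}. However, there is a genuine error in step (3)/(4) that breaks the argument as written.

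You claim that $[N]\otimes pt$ is the class of a point in $N\times M$. It is not: under K\"unneth, $[N]\otimes pt_M$ lies in $H_{\dim N}(N\times M)$ and represents the cycle $N\times\{*\}$. Likewise $pt_N\otimes\gamma'$ has degree $\dim\gamma'$, which need not be zero. Consequently none of the three insertions in your invariant
\[
\Psi^{N\times M}_{0\oplus A,0,3}\bigl(pt;\,[N]\otimes pt,\;[N]\otimes\gamma,\;pt\otimes\gamma'\bigr)
\]
is the point class of $N\times M$, so this invariant gives no information about $\GW_0(N\times M,\Omega\oplus a\omega; pt,[N]\times\gamma)$, whose definition requires $\alpha_1=pt$ among the marked-point constraints.

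The fix is immediate and is what the paper does: in Lemma \ref{prodGromovWitten} the only slot that produces a genuine point class on the product is the last one, $pt_N\otimes\beta_k$, so you must place $\beta_3=pt_M$ there. Taking $\beta_1=\alpha(M,\omega)$, $\beta_2=\beta(M,\omega)$, $\beta_3=pt$ yields
\[
\Psi^{N\times M}_{0\oplus A,0,3}\bigl(pt;\,[N]\times\alpha(M,\omega),\;[N]\times\beta(M,\omega),\;pt\bigr)
=\Psi^{M}_{A,0,3}\bigl(pt;\,\alpha(M,\omega),\beta(M,\omega),pt\bigr)\neq 0,
\]
which now has both $pt$ and $[N]\times\gamma$ among its insertions, and your steps (1), (2) go through unchanged to give $C_{HZ}^{(2o)}\leq\GW_0\leq |a|\pi$.
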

\begin{proof}
Since by   (\ref{nonzeroGW}) we have $\Psi^M_{A, 0, 3}(pt; \alpha,  \beta , pt) \neq 0,$
with $\alpha =\alpha (M, \omega )$ and  $\beta =\beta (M, \omega )$,
it follows by Lemma  \ref{prodGromovWitten}  that
\[\Psi^{N \times M}_{B, 0, 3}(pt; [N]\times \alpha (M, \omega), [N]\times \beta (M, \omega), pt )\neq 0
\]
for $B=0\times A$, where $0$ denotes the zero class in $H_2(N, \z)$ and $A$ the generator of $H_2(M, \z)$.
Hence (\ref{C2oNM})  easily follows   from (\ref{CHZGW0}) in Lemma \ref{C2GW}.
\end{proof}

\begin{proof}[Proof of Theorem \ref{main3}]
From  (\ref{basiciC2}) and  (\ref{cG<C2}) in Lemma \ref{lemmasumm}  and by (\ref{C2oNM}) it follows  that 
\[c_G(N\times M, \Omega\oplus a\omega )\leq  C_{HZ}^{(2o)}(N\times M, \Omega\oplus a\omega ; pt, [N]\times\gamma)\leq |a|\pi,\]
where $\gamma=\alpha  {(M, \omega )}$ (or $\gamma=\beta  {(M, \omega )}$),
which yields the desired inequality (\ref{cGMprodHSSCT}).
\end{proof}

\noindent
{\bf On Seshadri constants of homogeneous manifolds}

\noindent
Given a compact complex manifold $(N, J)$
and a holomorphic line bundle  $L\rightarrow N$
the \emph{Seshadri constant} of $L$ at a point $x\in N$ is defined as the nonnegative real number 
\[\epsilon (L, x)=\inf_{C\ni x}\frac{\int_Cc_1(L)}{\mult_xC}, \]
where the infimum is taken over all irreducible holomorphic curves $C$ passing through the point $x$
and $\mult_xC$ is the multiplicity of $C$ at $x$ (see \cite{DE} for details). 
The (global) Seshadri constant is defined by
\[\epsilon (L)=\inf_{x\in N}\epsilon (L, x).\]
Note that Seshadri's criterion for ampleness says that  $L$ is ample if and only if  $\epsilon (L)>0$.
P. Biran and K. Cieliebak \cite[Prop. 6.2.1]{BICI01} have  shown  that  
\[\epsilon (L)\leq c_G(N, \omega_L),\]
where $\omega_L$ is any  \K\ form 
which represents the first Chern class of $L$, i.e. $c_1(L)=[\omega_L]$.
Let now $(M, \omega )$
be a compact homogeneous Hodge manifold and $L$
be the (very) ample  
line bundle  $L\rightarrow M$  such that 
$c_1(L)=[\frac{\omega}{\pi}]$
($L$ can be taken as the pull-back of a suitable normalized Kodaira embedding $M\rightarrow \C P^N$  of the universal bundle of $\CP^N$).
Then, by using the upper bound $c_G(M, \omega )\leq\pi$  and   the conformality of $c_G$ 
we get: 
\begin{cor}\label{maincor}
Let $(M_i, \omega ^i)$, $i=1, \dots, r$, be  homogeneous compact Hodge manifolds as in Theorem \ref{main}.
Let $(M, \omega)= (M_1\times\dots\times   M_r,  \omega^1\oplus\dots  \oplus \omega^r)$ and $L\rightarrow M$ as above.
Then $\epsilon (L)\leq 1.$
\end{cor}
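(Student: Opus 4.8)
The plan is to deduce Corollary \ref{maincor} directly from Theorem \ref{main2} together with the Biran--Cieliebak inequality $\epsilon(L)\leq c_G(N,\omega_L)$ recalled above. First I would set up the bundle correctly: on each factor $M_i$ the normalization $\omega^i(A_i)=\pi$ for the generator $A_i\in H_2(M_i,\z)$ means that $[\omega^i/\pi]$ is the integral generator of $H^2(M_i,\z)$ (using $b_2(M_i)=1$), so there is a very ample line bundle $L_i\to M_i$ with $c_1(L_i)=[\omega^i/\pi]$, obtained as the pullback of the hyperplane bundle under the corresponding Kodaira embedding. On the product $M=M_1\times\cdots\times M_r$ I would take $L=L_1\boxtimes\cdots\boxtimes L_r$, i.e. $L=\bigotimes_i \mathrm{pr}_i^*L_i$, which is very ample and satisfies $c_1(L)=\sum_i \mathrm{pr}_i^*[\omega^i/\pi]=[\omega/\pi]$, where $\omega=\omega^1\oplus\cdots\oplus\omega^r$. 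Hence $\pi^{-1}\omega$ is a \K\ form representing $c_1(L)$, and it is a legitimate choice of $\omega_L$ in the Biran--Cieliebak bound.

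Next I would invoke the two inputs. By Theorem \ref{main2}, inequality \eqref{cGprodHSSCT}, we have $c_G(M,\omega)\leq\pi$. By the conformality property of the Gromov width, $c_G\bigl(M,\tfrac{1}{\pi}\omega\bigr)=\tfrac{1}{\pi}c_G(M,\omega)\leq 1$. Then Biran--Cieliebak gives
\[
\epsilon(L)\leq c_G\bigl(M,\omega_L\bigr)=c_G\bigl(M,\tfrac{1}{\pi}\omega\bigr)\leq 1,
\]
which is exactly the assertion $\epsilon(L)\leq 1$. This is essentially the one-line argument already sketched in the paragraph preceding the corollary, applied to the product rather than to a single factor.

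The only genuinely nontrivial point is the identification of $L$ and the verification that it is the "natural" ample line bundle in the sense intended: one must know that for a homogeneous Hodge manifold $M_i$ with $b_2(M_i)=1$ the normalization $\omega^i(A_i)=\pi$ forces $[\omega^i/\pi]$ to be a \emph{primitive} integral class, so that the line bundle $L_i$ with that first Chern class exists and is very ample (this uses that $M_i$ is a rational homogeneous space $G/P$, on which every ample line bundle is very ample, and that $H^2(G/P,\z)\cong H^2(G/P,\R)$ is torsion-free of rank one). Once this is in place, the $\boxtimes$-construction on the product and the computation $c_1(L)=[\omega/\pi]$ are routine, and the rest is immediate from Theorem \ref{main2}, conformality, and the Biran--Cieliebak estimate. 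I would therefore expect the write-up to be short, with the main obstacle being purely expository: making the passage from the symplectic normalization to the line bundle precise.
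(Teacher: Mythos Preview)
Your proposal is correct and follows essentially the same approach as the paper: apply Theorem~\ref{main2} to get $c_G(M,\omega)\leq\pi$, use conformality of the Gromov width to obtain $c_G(M,\omega/\pi)\leq 1$, and then invoke the Biran--Cieliebak inequality $\epsilon(L)\leq c_G(M,\omega_L)$. Your additional care in identifying $L$ on the product as $L_1\boxtimes\cdots\boxtimes L_r$ and checking that $[\omega^i/\pi]$ is primitive is more explicit than the paper's treatment, but the argument is the same.
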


\begin{remark}
The previous inequality has been found in \cite{LMZ13} when $(M, \omega)$ is a Hermitian symmetric space of compact type 
\end{remark}

\section*{Acknowledgments}
The first two  authors were  supported by Prin 2010/11 -- Variet\`a reali e complesse: geometria, topologia e analisi armonica -- Italy. The third author was supported by Firb project 2012 -- Geometria Differenziale e Teoria geometrica delle funzioni -- Italy.

\end{document}